\def\Re{\mathbb R}
\providecommand{\remove}[1]{}
\theoremstyle{plain}
\newtheorem{theorem}{Theorem}[section]
\newtheorem{lemma}[theorem]{Lemma}
\newtheorem{proposition}[theorem]{Proposition}
\newtheorem{corollary}[theorem]{Corollary}
\theoremstyle{definition}
\theoremstyle{remark}
\newcommand{\A}{\mathcal{A}}
\newcommand{\E}{\mathcal{E}}
\newcommand{\V}{\mathcal{V}}
\newcommand{\B}{\mathcal{B}}
\renewcommand{\L}{\mathcal{L}}
\newcommand{\F}{\mathcal{F}}
\newcommand*{\boundary}{\partial}%
\begin{document}

\title{On the Number of Hyperedges in the Hypergraph of Lines and Pseudo-discs}

\author{Chaya Keller\thanks{Computer Science Department, Ariel University, Ariel, Israel. \texttt{chayak@ariel.ac.il}. Research partially
		supported by Grant 1065/20 from the Israel Science Foundation. Parts of this research were done while the author was at the Mathematics Department, Technion -- Israel Institute of Technology, Israel.}
	\and
	Bal\'azs Keszegh\thanks{Alfr\'ed R{\'e}nyi Institute of Mathematics, and MTA-ELTE Lend\"ulet Combinatorial Geometry Research Group, Institute of Mathematics, E\"otv\"os Lor\'and University, Budapest, Hungary. \texttt{keszegh@renyi.hu}. Research supported by the Lend\"ulet program of the Hungarian Academy of Sciences (MTA), under the grant LP2017-19/2017, by the J\'anos Bolyai Research Scholarship of the Hungarian Academy of Sciences, by the National Research, Development and Innovation Office -- NKFIH under the grant K 132696 and FK 132060 and by the \'UNKP-20-5 New National Excellence Program of the Ministry for Innovation and Technology from the source of the National Research, Development and Innovation Fund.}
	\and
	D\"om\"ot\"or P\'alv\"olgyi\thanks{MTA-ELTE Lend\"ulet Combinatorial Geometry Research Group, Institute of Mathematics, E\"otv\"os Lor\'and University, Budapest, Hungary. \texttt{domotorp@gmail.com}. Research supported by the Lend\"ulet program of the Hungarian Academy of Sciences (MTA), under the grant LP2017-19/2017.
	}
}

\date{}
\maketitle

%\maketitle

% ABSTRACT
% E-JC papers must include an abstract. The abstract should consist of a
% succinct statement of background followed by a listing of the
% principal new results that are to be found in the paper. The abstract
% should be informative, clear, and as complete as possible. Phrases
% like "we investigate..." or "we study..." should be kept to a minimum
% in favor of "we prove that..."  or "we show that...".  Do not
% include equation numbers, unexpanded citations (such as "[23]"), or
% any other references to things in the paper that are not defined in
% the abstract. The abstract may be distributed without the rest of the
% paper so it must be entirely self-contained.  Try to include all words
% and phrases that someone might search for when looking for your paper.

\begin{abstract}
  Consider a hypergraph whose vertex set is a family of $n$ lines in general position in the plane, and whose hyperedges are induced by intersections with a family of pseudo-discs. We prove that the number of $t$-hyperedges is bounded by $O_t(n^2)$ and that the total number of hyperedges is bounded by $O(n^3)$. Both bounds are tight. 
\end{abstract}

\section{Introduction} 
A family $\F$ of simple Jordan regions in $\Re^2$ is called \emph{a family of pseudo-discs} if for any $c_1,c_2 \in \F$, $|\boundary(c_1) \cap \boundary(c_2)|\leq 2$, where $\boundary(c)$ is the boundary of $c$. Given a set $P$ of points in $\Re^2$ and a family $\F$ of pseudo-discs, define the geometric hypergraph $H(P,\F)$ whose vertices are the points of $P$, and any pseudo-disc $c \in \F$ defines a hyperedge of all points contained in $c$. 

The family of hypergraphs $H(P,\F)$ -- for a general $\F$ and in the special case where all elements of $\F$ are convex -- have been studied extensively (see, e.g., \cite{AKP20,AG86,AU16,CFSS14,MSW90}). In particular, it was proved in \cite{BP13} that for any $P,\F$, the Delaunay graph of $H(P,\F)$ (namely, the restriction of $H$ to hyperedges of size 2) is planar, and that for any fixed $t$, the number of hyperedges of $H(P,\F)$ of size $t$ is bounded by $O(t^2|P|)$. This result was generalized in \cite{K20} (see also~\cite{ADEP21}) to the case where $P$ is a family of pseudo-discs instead of points, and the hyperedges are defined by non-empty intersections of any element in $\F$ with the elements of $P$. 
%Furthermore, it was proved in \cite{BP13} that for any fixed $t$, the number of hyperedges of $H(P,\F)$ of size $t$ is bounded by $O(t^2|P|)$. This result was also generalized in \cite{K20} (see also \cite{ADEP21}) to the case where $P$ is a family of pseudo-discs.    

In this note we consider hypergraphs $H=H(\L,\F)$ whose vertex set $\V(H)=\L$ is a family of lines in the plane, and whose hyperedges are induced by intersections with a family $\F$ of pseudo-discs. Namely, any $c \in \F$ defines the hyperedge
$$e_c=\{ \ell \in \L : \ell \cap c \neq \emptyset    \}  \in \E(H).$$
We assume that the geometric objects are in general position, in the sense that no 3 lines pass through a common point, no line passes through an intersection point of two boundaries of pseudo-discs. 

Unlike the hypergraphs of points w.r.t.~pseudo-discs, $H(P,\F)$, the number of hyperedges in a hypergraph $H(\L,\F)$, of lines w.r.t.~pseudo-discs, of any fixed size, may be quadratic in the number of vertices. 
%(Figure \ref{fig4} demonstrates such a case). 
Such a hypergraph was demonstrated in a beautiful paper of Aronov et al.~\cite{ANPS93}. They showed that for any family $\L$ of lines, if $\F$ consists of the inscribed circles of the triangles formed by any triple of lines, then for any $t \geq 3$, the number of $t$-hyperedges (i.e., hyperedges of size $t$) in $H(\L,\F)$ is exactly ${{n-t+2}\choose{2}}$.

For any fixed $t$, there exist hypergraphs $H(\L,\F)$ in which the number of $t$-hyperedges is larger than in the construction of Aronov et al.~\cite{ANPS93}, even when $\F$ is allowed to contain only discs (as some of those discs might not be inscribed in a triangle formed by the lines). We prove that the number of $t$-hyperedges cannot be significantly larger for any hypergraph $H(\L,\F)$ of lines with respect to pseudo-discs.\footnote{For the difference between hypergraphs induced by pseudo-discs and hypergraphs induced by discs, see~\cite{FS20} and the references therein.} Specifically, we prove:
%We prove that for any fixed $t$, the asymptotic number of the $t$-hyperedges (i.e., hyperedges of size $t$) is bounded by $O_t(|\L|^2)$, and that this bound is tight. 

\iffalse
\begin{figure}[tb]
	\begin{center}
		\scalebox{0.6}{
			%			\includegraphics{fig4old}
			\includegraphics{grid-disks}
		}
		\caption{An example for $t$=8, in which the number of $t$-hyperedges is quadratic in the number of lines.}
		\label{fig4}
	\end{center}
\end{figure}
\fi

\begin{theorem}
	\label{thm:main}
	Let $\L$ be a family of $n$ lines in the plane, let $\F$ be a family of pseudo-discs, and assume both families are in general position. Then 
	$$  |\{  e \in \E(H(\L,\F))   :  |e|=t \}|  =O_t(n^2).  $$
	%	On the other hand, for every fixed $ t \geq 2$ there exist families $\L,\F$ of lines and (pseudo-)discs in $\Re^2$, such that
	%	$$  |\{  e \in \E(H(\L,\F))   :  |e|=t \}|  =\Omega_t(n^2).  $$ 
\end{theorem}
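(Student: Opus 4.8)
The plan is to reduce the problem about lines intersecting pseudo-discs to a problem about points and pseudo-discs, so that the result of Buzaglo–Pinchasi–Rote (the $O(t^2|P|)$ bound, cited as \cite{BP13} in the excerpt) can be applied. The key observation is a duality/point-line incidence trick: a line $\ell$ meets a convex-ish region $c$ iff $c$ is "stabbed" by $\ell$, and the set of lines stabbing $c$ corresponds, in the dual plane, to a double wedge or an interval of directions. But since $\F$ need not be convex, I would instead work more directly with the geometry.

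**Main decomposition**

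First I would fix $t$ and a hyperedge $e_c$ with $|e_c|=t$. The $t$ lines of $e_c$ are exactly the lines meeting $c$; equivalently, $c$ lies entirely on one side of the $n-t$ lines it misses, and $c$ crosses the $t$ lines it hits. I would look at the arrangement $\A(\L)$ of the $n$ lines, and note that $c$ is contained in the union of cells of $\A(\L)$, and the lines it crosses are precisely those bounding this union of cells. The combinatorial type of $e_c$ is therefore determined by a connected region $R$ in the plane (roughly, the union of faces of $\A(\L)$ that $c$ meets), together with the fact that $c\subseteq$ (closure of) the "relevant" part. The heart of the matter is to bound the number of distinct such $R$'s, i.e., the number of connected unions of faces of $\A(\L)$ whose boundary touches exactly $t$ of the $n$ lines. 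This is where I expect the main obstacle to lie: naively there can be many such unions of faces.

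**Using the pseudo-disc property**

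To control this, I would use that $\F$ is a family of pseudo-discs, which forces a near-laminar structure. I would associate to each pseudo-disc $c$ a "canonical point" — for instance, a point of $c$ chosen by a deterministic rule (leftmost point, or the center in some normalization) — together with a bounded amount of combinatorial data (the at most $t$ lines of $e_c$ and their cyclic/linear order around $c$, which has complexity $O_t(1)$ by the pseudo-disc condition). The pseudo-disc property implies that two pseudo-discs meeting the same set of $t$ lines, in the same cyclic order, and containing the same canonical face of $\A(\L)$, induce the same hyperedge — or at worst, the number of hyperedges with a fixed canonical face is $O_t(1)$. So the count reduces to: (number of faces of $\A(\L)$) $\times\, O_t(1)$, and $\A(\L)$ has $O(n^2)$ faces, giving the bound. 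Alternatively — and this is probably the cleaner route — I would define a point set $P$ by placing one point in each face of $\A(\L)$ (so $|P|=O(n^2)$), replace each pseudo-disc $c$ by the union of faces it meets (still behaving like a pseudo-disc on $P$ after a suitable regularization, using general position), observe that the hyperedge of $c$ on $\L$ is a function of the hyperedge of (the modified) $c$ on $P$ together with $O_t(1)$ bounded data, and then invoke the $O(t^2|P|)=O(t^2 n^2)$ bound from \cite{BP13,K20}.

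**Where the difficulty concentrates**

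The technical crux — and the step I expect to write carefully — is verifying that the modified regions $\{c' : c\in\F\}$ obtained by "snapping $c$ to the faces of $\A(\L)$ that it meets" still form (or can be perturbed into) a pseudo-disc family with respect to the representative point set $P$, and that passing to $P$ loses only a factor of $O_t(1)$ rather than something growing with $n$. The general-position hypotheses (no three lines concurrent, no line through a crossing of two pseudo-disc boundaries) are exactly what I would lean on to make this snapping well-defined and to keep boundary intersections at most $2$. Once that is established, the counting is immediate: $|\{e\in\E(H(\L,\F)):|e|=t\}|\le O_t(1)\cdot|\{e\in\E(H(P,\F')):|e|=t'\}|=O_t(1)\cdot O(t^2 n^2)=O_t(n^2)$, as desired.
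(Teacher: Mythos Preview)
Your reduction has a genuine gap at exactly the point you identify as the crux. Snapping a pseudo-disc to the union of faces of $\A(\L)$ that it meets does \emph{not} yield a pseudo-disc family. For a concrete obstruction, take four vertical lines and one horizontal line $h$, let $c_1$ be a thin $\cap$-shaped region meeting the faces $B_1,A_1,A_2,A_3,B_3$ (two legs below $h$ joined by a bridge just above $h$), and let $c_2$ be a thin horizontal strip higher up in the $A$-row meeting exactly $A_1,A_2,A_3$. Then $c_1,c_2$ are disjoint, hence certainly a valid pseudo-disc pair; but the snapped region $c_2'=\overline{A_1\cup A_2\cup A_3}$ is contained in $c_1'$, and $c_1'\setminus c_2'$ consists of the two separated faces $B_1$ and $B_3$. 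So the non-piercing property fails and you cannot invoke the Buzaglo--Pinchasi--Rote bound for the snapped family. Your alternative --- that the number of $t$-hyperedges with a fixed canonical face is $O_t(1)$ --- is also false: already for $t=1$, a face $f$ with $k$ bounding lines supports $k$ distinct $1$-hyperedges, and $k$ can be linear in $n$. And if you do not snap, the hyperedge of $c$ on the representative point set $P$ does not determine its hyperedge on $\L$, since a thin pseudo-disc can cross $t$ lines while containing no representative point at all.

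The paper's argument is quite different and does not pass through the points-vs.-pseudo-discs hypergraph. It proves the stronger per-line statement that each fixed $\ell\in\L$ lies in only $O_t(n)$ hyperedges of size $t$. For $t=3$ this is done directly: a pseudo-disc realising a $3$-hyperedge $\{\ell,\ell',\ell''\}$ must hit two edges (one on $\ell'$, one on $\ell''$) of some cell in the $(\le 2)$-zone of $\ell$ in $\A(\L\setminus\{\ell\})$, and no other edge of that cell; the planarity lemma for pseudo-discs versus disjoint connected sets (Lemma~\ref{lem:disjoint}) then bounds the number of such pairs within each cell by its complexity, and the generalised zone theorem (Corollary~\ref{lem:zone}) sums these to $O(n)$. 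The passage from $3$ to general $t$ is by the VC-dimension leveraging lemma (Lemma~\ref{Lem:Leverage}) applied to the hypergraph of hyperedges through $\ell$. In short, the zone theorem --- not a reduction to point-hyperedges --- is the engine of the proof.
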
  

Our techniques combine probabilistic and planarity arguments, together with exploiting properties of arrangements of lines, in particular the \emph{zone theorem}.

\iffalse
\domotor{It would be nice to give a unified result that says 
	$$  |\{  e \in \E(H(\L,\F))   :  |e|\le t \}|  =O(tn^2).  $$
}
\balazs{alreaady in the pt wrt psdisk upper bound of rom et al the dependence on t is non-linear. of course we here have non-linear dependence on n, so all is quite different, still i doubt we will be able to prove $O(tn^2)$. let's think about this after updating on arxiv and resubmitting. on the other hand even $O(n^2)$ bound for any fixed $t$ (i.e. a bound independent of $t$) may be true. which would also imply immediately the $O(n^3)$ bound for all hyperedges.}
\fi

\medskip In addition, we show that for any choice of $\L$ and $\F$, the total number of hyperedges in $H(\L,\F)$ does not exceed $O(n^3)$. This upper bound is tight, since the total number of hyperedges in the hypergraph presented by Aronov et al.~\cite{ANPS93} is ${{n}\choose{3}}$.
\begin{proposition}
	\label{prop:total}
	Let $\L$ be a family of $n$ lines in the plane, let $\F$ be a family of pseudo-discs, and assume both families are in general position. Then $|\E(H(\L,\F))|=O(n^3)$. 
	%\end{proposition} 
	%\begin{proposition}
	%	\label{prop:totalLB}
	
	%	\medskip \noindent On the other hand, there exist families $\L,\F$ of lines and pseudo-discs in the plane, such that
	%	$|\E(H(\L,\F))|=\Omega(n^3)$. 
\end{proposition}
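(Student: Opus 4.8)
\noindent The plan is to reduce to convex bodies, pass to a dual parameter plane in which a hyperedge becomes the trace of a fixed $n$-point set on a ``lens''-shaped region, and then bound the number of such traces by a pinning argument that charges each hyperedge to a constant number of lines of $\L$; the construction of Aronov et al.~\cite{ANPS93} with $\binom n3$ hyperedges will then show the bound is best possible.

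\emph{Step 1 (reduction to convex bodies).} For a pseudo-disc $c$ and a line $\ell$, the region $c$ is connected, so $c\cap\ell=\emptyset$ holds if and only if $c$ lies in one of the two open half-planes bounded by $\ell$, which holds if and only if $\mathrm{conv}(c)$ lies in that half-plane. Hence $e_c=e_{\mathrm{conv}(c)}$, so it suffices to bound the number of distinct sets $e_K=\{\ell\in\L:\ell\cap K\ne\emptyset\}$ as $K$ ranges over convex bodies; replacing $K$ by the convex hull of one point of $\ell\cap K$ for each $\ell\in e_K$, we may further assume each $K$ is a bounded convex polygon.

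\emph{Step 2 (dual reformulation).} After a generic rotation, assume no line of $\L$ is vertical and write $\ell_i\colon y=a_i x+b_i$. A line $y=ax+b$ meets a compact convex body $K$ if and only if $m_K(a)\le b\le M_K(a)$, where $m_K(a)=\min_{(x,y)\in K}(y-ax)$ is concave in $a$, $M_K(a)=\max_{(x,y)\in K}(y-ax)$ is convex in $a$, and $m_K\le M_K$ everywhere. Setting $q_i=(a_i,b_i)$, this gives $e_K=\{\ell_i:q_i\in R_K\}$ with
\[
R_K=\{(a,b):m_K(a)\le b\le M_K(a)\}=\bigcup_{(x,y)\in K}\{(a,b):b=y-ax\};
\]
thus $R_K$ is the union of the lines dual to the points of $K$. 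Consequently the number of hyperedges is at most the number of distinct subsets cut out of the fixed $n$-point set $\{q_1,\dots,q_n\}$ by regions lying between a concave curve below and a convex curve above that dominates it --- equivalently, by regions whose complement splits as (points strictly above a convex curve) $\sqcup$ (points strictly below a concave curve that lies below that convex curve).

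\emph{Step 3 (pinning, and the main obstacle).} Given such a trace $S$, take a convex body $K$ realizing it. Enlarging $K$ only enlarges $R_K$, hence preserves $S\subseteq R_K$; enlarge $K$ until $e_K$ is about to change, i.e.\ until $K$ equals the open convex \emph{pocket} $\Pi=\bigcap_{\ell\in\L\setminus e_K}H_\ell$, the intersection of the open half-planes bounded by the avoided lines that contain $K$. Then $\Pi$ is a cell of the arrangement of the avoided lines, every line of $e_K$ crosses $\Pi$, and $e_K$ is exactly the set of lines of $\L$ meeting $\Pi$. The plan now is to show that such a pocket is \emph{pinned by three lines of $\L$}: that $e_K$ is determined by at most three lines of $\L$ together with $O(1)$ extra bits (such as which of the constantly many faces these lines bound carries the canonical body), whence there are $O(n^3)$ hyperedges. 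This is exactly the behaviour of the extremal example of Aronov et al.~\cite{ANPS93}, where the hyperedge realized by the incircle of the triangle of $\ell_i,\ell_j,\ell_k$ is pinned by precisely those three lines, and the total number of hyperedges there is $\binom n3=\Theta(n^3)$, matching the upper bound. The hard part is establishing the pinning in general: a cell of a sub-arrangement may have arbitrarily many bounding edges, so one has to exploit the extra information that \emph{all} of $e_K$ crosses $\Pi$ in order to argue that only a constant number of its bounding lines --- the avoided lines nearest to $K$ in a few fixed generic directions, say --- actually determine $e_K$. Controlling how $\partial\Pi$ interacts with the lines of $\L$ is where the \emph{zone theorem} for arrangements of lines is invoked, as in the proof of Theorem~\ref{thm:main}.
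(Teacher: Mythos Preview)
Your Step~1 and Step~2 are fine, but Step~3 is not a proof --- it is a plan whose central claim you yourself flag as unproved (``The hard part is establishing the pinning in general''). After enlarging $K$ to the pocket $\Pi$, this cell of $\A(\L\setminus e_K)$ may be bounded by arbitrarily many lines, and you give no argument why three of them (plus $O(1)$ bits) determine $e_K$. The Aronov et al.\ example happens to have this pinning structure by construction, but that says nothing about an arbitrary hyperedge. The vague appeal to the zone theorem at the end does not bridge the gap: the zone theorem bounds the complexity of cells \emph{near one fixed line}, whereas here you would need to control a cell of a sub-arrangement in which \emph{every} line of $e_K$ crosses it, a different and unclear statement. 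As written, the argument does not establish the $O(n^3)$ bound.

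The paper's proof is quite different in spirit and does not use the zone theorem at all. Instead of enlarging, it \emph{shrinks} each pseudo-disc (via Lemma~\ref{Pin14Lemma1}) until it is tangent to exactly two lines $\ell_1,\ell_2$; this preserves the hyperedge. Then, for each ordered pair $(\ell_1,\ell_2)$ and each of the four wedges, the shrunk pseudo-discs tangent to both sides carry a natural linear order $\prec$ (by the position of the tangency segment), and every other line of $\L$ meets an interval of consecutive elements under $\prec$. Hence only $O(n)$ distinct hyperedges arise per pair of lines, and summing over the $O(n^2)$ pairs gives $O(n^3)$. Both the well-definedness of $\prec$ and the ``consecutive interval'' property rely on the pseudo-disc hypothesis (boundaries meet at most twice) --- precisely the structure your Step~1 throws away. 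If your convex-body/pinning route could be completed it would prove the stronger statement that \emph{any} family of convex bodies yields $O(n^3)$ hyperedges, but that is not what you need here, and the paper's two-tangent-lines argument gets the stated proposition cleanly.
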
 

\section{Preliminaries}

In this section we present previous results and simple lemmata that will be used in our proofs.

\subsection{Pseudo-discs}

The two following lemmata are standard useful tools when handling families of pseudo-discs:

\begin{lemma}[Lemma 1 in \cite{Pin14}, based on \cite{SH91}]
	\label{Pin14Lemma1}
	Let $\F$ be a family of pseudo-discs, $D \in \F, x \in D$. Then $D$ can be continuously shrunk to the point $x$, such that at each moment during the shrinking process, the family obtained from $\F$ remains a family of pseudo-discs.
\end{lemma}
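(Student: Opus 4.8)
The plan is to realize the shrinking as a continuous, \emph{nested} deformation $D=D_0\supseteq D_s\supseteq D_t\supseteq\{x\}$ (for $0\le s\le t\le 1$, with $D_1=\{x\}$), and to control it through a single invariant: the boundary $\partial D_t$ meets each $\partial C$, $C\in\F\setminus\{D\}$, in at most two points throughout. Since the only pair of the deformed family that changes is the one involving $D_t$, maintaining this invariant is exactly what keeps $\{D_t\}\cup(\F\setminus\{D\})$ a family of pseudo-discs. Under general position it is convenient to restate the invariant combinatorially: because two Jordan curves crossing transversally meet an even number of times, $|\partial D_t\cap\partial C|\le 2$ is equivalent to requiring that both $D_t\cap C$ and $D_t\setminus C$ be connected. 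This is the quantity I would actually track during the deformation.

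As a first simplification I would apply a homeomorphism of the plane sending $D$ to the unit disc and $x$ to its center; homeomorphisms map pseudo-disc families to pseudo-disc families and preserve the connectivity invariant, so this is free and it tidies the topology of the region being shrunk (though it does not round the other boundaries). Note that a naive homothety toward $x$ fails even after this reduction: a round circle and a pseudocircle may cross many times, so rigidly scaling $\partial D$ can momentarily create more than two crossings with some $\partial C$. The deformation must therefore be \emph{adaptive}.

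I would then discretize the deformation using the arrangement $\mathcal A$ formed inside $D$ by the arcs $\{\partial C\cap D : C\in\F\setminus\{D\}\}$; each such set is a single arc (or a closed curve, when $C\subsetneq D$) precisely because $\partial C$ crosses $\partial D$ at most twice. Shrinking $D$ to $x$ then amounts to ``peeling'' the cells of $\mathcal A$ from the outside inward, and the task becomes combinatorial: find an order in which to remove the cells so that at every intermediate stage the remaining region $D_t$ is simply connected, contains $x$, and keeps $D_t\cap C$ and $D_t\setminus C$ connected for \emph{every} $C$ simultaneously. Given such a shelling order, interpolating each single-cell removal by a small continuous inward push of $\partial D_t$ produces the desired continuous shrinking, and the final one-cell region around $x$ is collapsed to $x$ directly.

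The crux — and the step I expect to be the main obstacle — is proving that a valid removal order always exists, i.e.\ that from any admissible intermediate region other than the final one there is always a cell whose removal preserves simple connectivity and the at-most-two-crossings condition for all $C$ at once. The danger is a ``locally extreme'' arc of some $\partial C$ that a careless inward motion would sweep over, disconnecting $D_t\setminus C$ or $D_t\cap C$ and thereby producing four crossings. I would argue that no configuration can block every possible removal: assuming a deadlock, the mutual at-most-twice crossing property of the pseudo-discs forbids the cyclic interlocking of arcs that such a deadlock would require, yielding a contradiction through a planarity/Euler-characteristic count on $\mathcal A$. This existence-of-a-sweep statement is exactly the content of the topological sweeping result of \cite{SH91}, and the heart of the argument is the reduction of the lemma to it.
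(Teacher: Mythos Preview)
The paper does not prove this lemma at all: it is quoted verbatim as Lemma~1 of \cite{Pin14} (which in turn rests on the sweeping machinery of \cite{SH91}) and is used as a black box. There is therefore no ``paper's own proof'' to compare your proposal against.

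That said, your sketch is a faithful outline of how the cited proof actually proceeds, and you correctly identify the crux: the existence of an admissible peeling/sweeping order is precisely the nontrivial input supplied by \cite{SH91}. Two remarks. First, your proposal is explicitly a plan rather than a proof; the sentence ``assuming a deadlock, the mutual at-most-twice crossing property \ldots\ forbids the cyclic interlocking \ldots\ yielding a contradiction through a planarity/Euler-characteristic count'' is where all the work lives, and you have not carried it out. If you intend to present a self-contained argument you must actually execute that step (or, as the paper does, simply cite \cite{SH91,Pin14}). Second, a small technical caveat: after your homeomorphism the other boundaries $\partial C$ need not be in any kind of general position with the concentric circles around $x$, so the ``arrangement $\mathcal A$'' you form may have tangencies and non-transversal intersections; the discretization and the parity argument you invoke require a perturbation or a more careful topological formulation. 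None of this is fatal, but it is exactly the sort of detail that \cite{Pin14} handles and that your write-up would need to address if it is to stand on its own.
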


\begin{lemma}[Lemma 2 in \cite{Pin14}]
	\label{lem:disjoint}
	Let $\B$ be a family of pairwise disjoint closed connected sets in $\Re^2$. Let $\F$ be a family
	of pseudo-discs. Define a graph $G$ whose vertices correspond to the sets in B and connect two sets
	$B, B' \in \B$ if there is a set $D \in \F$ such that $D$ intersects $B$ and $B'$
	but not any other set from $B$.
	Then $G$ is planar, hence $|E(G)|<3|V(G)|$.
\end{lemma}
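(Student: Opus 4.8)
The plan is to exhibit an explicit crossing-free drawing of $G$ in the plane; once $G$ is shown to be planar, it is also simple (its edges are unordered pairs of distinct sets), so Euler's formula yields $|E(G)|\le 3|V(G)|-6<3|V(G)|$. To build the drawing, fix a representative point $p_i$ inside each $B_i$ (we may assume each $B_i$ is path-connected; otherwise replace it by a slightly larger path-connected set, keeping $\F$ a family of pseudo-discs). For every edge $e=\{i,j\}$ of $G$ fix a witness $D_e\in\F$, choose points $x_e\in B_i\cap D_e$ and $y_e\in B_j\cap D_e$, and route the edge as the concatenation $\gamma_e=\alpha^i_e\cup\beta_e\cup\alpha^j_e$, where $\alpha^i_e\subseteq B_i$ is an arc from $p_i$ to $x_e$, $\beta_e\subseteq D_e$ is an arc from $x_e$ to $y_e$ (these exist because $D_e$ is a simply connected Jordan region), and $\alpha^j_e\subseteq B_j$ runs from $y_e$ to $p_j$. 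The crucial feature of the witness condition is that $\gamma_e\subseteq B_i\cup D_e\cup B_j$, and since the sets of $\B$ are pairwise disjoint and $D_e\cap B_k=\emptyset$ for every $k\notin\{i,j\}$, the curve $\gamma_e$ meets no other set $B_k$.

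Next I would localize the crossings. For two independent edges $e=\{i,j\}$ and $f=\{g,h\}$ (four distinct indices), the tails $\alpha^i_e,\alpha^j_e$ live in $B_i\cup B_j$ and the tails $\alpha^g_f,\alpha^h_f$ in $B_g\cup B_h$, which are disjoint from each other and, respectively, from $D_f$ and $D_e$; hence $\gamma_e\cap\gamma_f\subseteq \beta_e\cap\beta_f\subseteq D_e\cap D_f$, and moreover both endpoints of $\beta_e$ lie outside $D_f$ and both endpoints of $\beta_f$ outside $D_e$. This is exactly where the pseudo-disc hypothesis enters: since $|\boundary(D_e)\cap\boundary(D_f)|\le 2$, the intersection $D_e\cap D_f$ is a single ``lens'', and the part of $\boundary(D_f)$ lying inside $D_e$ is one connected arc. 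I would use the shrinking tool of Lemma~\ref{Pin14Lemma1} if needed to keep each $\beta_e$ close to its disc and to simplify the lens structure.

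I would then run a minimal-crossing argument: among all drawings obtained by the recipe above (we are free to reroute $\beta_e$ inside $D_e$ and $\alpha^i_e$ inside $B_i$), take one minimizing the total number of crossings, and show it has none. If $\beta_e$ and $\beta_f$ crossed inside the lens $D_e\cap D_f$, the fact that both endpoints of each disc-arc lie outside the other disc lets me push one arc along the lens boundary to cancel a crossing without creating new ones, contradicting minimality. The adjacent-edge case, where $e=\{i,j\}$ and $f=\{i,h\}$ share the vertex $i$, is handled by drawing the tails inside $B_i$ as a non-crossing star centered at $p_i$, so that the two curves meet only at $p_i$ and possibly inside $D_e\cap D_f$, and then applying the same lens rerouting.

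The main obstacle I anticipate is precisely this adjacent-edge bookkeeping: because $D_e$ and $D_f$ both meet the shared set $B_i$, the disc-arc of one edge can interact with the $B_i$-tail of the other, so the clean ``crossings live only in the lens'' statement needs the extra care of coordinating the rerouting inside $B_i$ with the rerouting inside the lens and verifying that each step strictly decreases the crossing count. Once this is carried out and a crossing-free drawing is obtained, planarity is established and the edge bound $|E(G)|<3|V(G)|$ follows immediately from Euler's formula.
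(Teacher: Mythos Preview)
The paper does not prove this lemma; it is quoted from~\cite{Pin14} and used as a black box, so there is no in-paper argument to compare against. Your drawing recipe---route the edge $\{i,j\}$ through $B_i$, then the witness $D_e$, then $B_j$---and your localization of crossings between independent edges to the lens $D_e\cap D_f$ are correct and match how the published proofs begin.

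The genuine gap is the minimal-crossing step. You assert that at a crossing one can ``push one arc along the lens boundary to cancel a crossing without creating new ones.'' Pushing $\beta_e$ to hug $a=\partial D_f\cap D_e$ does keep it inside $D_e$ and does separate it from $\beta_f$, but the arc $a$ may run through other lenses $D_e\cap D_{f'}$, and the rerouted $\beta_e$ can then pick up new crossings with the corresponding $\beta_{f'}$. You give no argument that the \emph{total} crossing number drops, so the contradiction with minimality is not established; the adjacent-edge case you flag has the same defect, compounded by interactions inside the shared $B_i$.

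The clean fix---and the one used in~\cite{Pin14} and~\cite{BP13}---is to drop the rerouting and apply the Hanani--Tutte theorem: it suffices that in your drawing any two \emph{independent} edges cross an even number of times. With your own localization this is a one-line parity count: the lens $L=D_e\cap D_f$ is bounded by $a=\partial D_f\cap D_e$ and $a'=\partial D_e\cap D_f$; since $x_e,y_e\notin D_f$, the portion of $\beta_e$ in $L$ consists of sub-arcs with both endpoints on $a$, and symmetrically for $\beta_f$ and $a'$; in a topological disc an arc with both ends on one boundary arc meets an arc with both ends on the complementary boundary arc evenly (close the first along $a$ to a Jordan curve and note that the connected arc $a'$ lies in a single complementary region). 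Hanani--Tutte then gives planarity, and Euler's formula gives $|E(G)|<3|V(G)|$; the adjacent-edge case becomes irrelevant since Hanani--Tutte imposes no condition on adjacent pairs.
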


\iffalse

Lemma \ref{BP13Lemma1} implies the following lemma, which will be useful in the sequel:

%\domotor{Instead of this, we should add the more general one about $k$ shapes in the plane.}
%\balazs{Lemma 2 in \cite{Pin14} is believe exactly this, so instead of proving we can just cite it (and replace edges of convex polygon with disjoint regions). and then we don't need to mention the previous lemma \ref{BP13Lemma1}.}

\begin{lemma}
	\label{lem:planar}
	Let $R$ be a convex polygon with $k$ edges $e_1, \ldots , e_k$. Let $\F$ be a family of pseudo-discs. Consider the graph $G$ on the vertex-set $V(G)=\{e_1, \ldots, e_k\}$, where 
	$$E(G)= \{   \{ e_i,e_j \}  : \mbox{there exists } c \in \F  \mbox{ s.t. }  c \cap  \{e_1, \ldots,e_k\} =      \{ e_i,e_j \}   \}.$$
	Then $G$ is planar, hence $|E(G)|<3|V(G)|$.
\end{lemma}
\begin{proof}[Proof of Lemma \ref{lem:planar}]
	Let us represent each of the edges $e_i$ by its midpoint, $v_i$. For any edge $\{e_i,e_j\} \in E(G)$ which is defined by some $c \in \F$, we attach a planar drawing of a closed curve, connecting $v_i$ and $v_j$, that consists of a segment on $e_i$, a segment on $e_j$, and the rest of the curve is contained in $c$ (see Figure \ref{fig1}). 
	By the classical Hanani-Tutte theorem (\cite{Tut70}, see also \cite{CH34}), a graph is planar if and only if it admits a planar drawing in which any two edges that do not share a vertex intersect an even number of times. The drawing described above, that satisfies the conditions of Lemma~\ref{BP13Lemma1}, is a planar drawing of $G$ that satisfies the assumption of the Hanani-Tutte Theorem, and thus $G$ is planar as asserted.
\end{proof}

\begin{figure}[tb]
	\begin{center}
		\scalebox{0.6}{
			\includegraphics{fig1}
		}
		\caption{Illustration for the proof of Lemma \ref{lem:planar} with $k=6$. The planar drawing is in bold.}
		\label{fig1}
	\end{center}
\end{figure}

\fi

\subsection{Arrangements and zones} 

A finite set $\L$ of lines in $\Re^2$ determines an \emph{arrangement} $\A$. The 0-dimensional faces of $\A$ (namely, the intersections of two distinct lines from $\L$), are called \emph{the vertices of $\A$}, the 1-dimensional faces are called \emph{the edges of $\A$}, and the 2-dimensional faces are \emph{the cells of $\A$}. Clearly, all cells are convex. The \emph{cell complexity} of a cell $f$ in $\A$,  denoted by $comp(f)$, is the number of lines incident with the cell. 
The \emph{zone} of an additional line $\ell $, is the set of faces of $\A$ intersected by $\ell$. The \emph{complexity of a zone} is the sum of the cell complexities of the faces in the zone of $\ell$, i.e., total number of edges of these faces, counted with multiplicities.

\begin{theorem}[Zone Theorem \cite{CGL85}]
	In an arrangement of $n$ lines, the complexity of the zone of a line is $O(n)$.
\end{theorem}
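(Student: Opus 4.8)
The plan is to prove the planar Zone Theorem by a direct induction on the number of lines $n$, using the standard \emph{left/right bounding edge} charging argument. First I would normalize the configuration: by general position we may assume no line of $\L$ is parallel to $\ell$, so after rotating the plane I may take $\ell$ to be horizontal and no line of $\L$ to be horizontal. Such a rotation preserves the arrangement $\A$, all incidences, and the zone, so it costs nothing. Recall that the complexity of the zone is the total number of edges bounding the cells of $\A$ met by $\ell$, counted with multiplicity. I would classify each such edge: an edge $e$ on the boundary of a zone cell $f$ is \emph{left-bounding} if $f$ lies to the right of the line supporting $e$, and \emph{right-bounding} otherwise. Since every bounding edge is one or the other, it suffices to bound the number of left-bounding edges by $O(n)$; the analogous bound on right-bounding edges follows from the mirror-symmetric argument (reflecting through a vertical line, which keeps $\ell$ horizontal and swaps the two classes), and summing the two gives the theorem.

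The core claim I would establish by induction on $n$ is that the number of left-bounding edges in the zone of $\ell$ is at most $cn$ for an absolute constant $c$ (one can take $c=5$, but any constant suffices for the $O(n)$ statement). The base case $n=1$ is immediate: a single line $m$ crosses $\ell$ and splits the plane into two cells, both met by $\ell$, and $m$ is a left-bounding edge of exactly one of them. For the inductive step I would single out the line $m \in \L$ whose crossing point $p = m \cap \ell$ has the largest $x$-coordinate among all crossings on $\ell$. Deleting $m$ leaves an arrangement of $n-1$ lines whose zone, by the inductive hypothesis, contains at most $c(n-1)$ left-bounding edges; the task then reduces to showing that re-inserting $m$ raises the count by at most the constant $c$.

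This re-insertion estimate is the subtle point, and the step I expect to be the main obstacle, because a priori a newly added line could pass through many cells of the old zone and split many of their left-bounding edges into two, which would destroy the linear bound. The resolution exploits the extremal choice of $m$. Since $p$ is the \emph{rightmost} crossing on $\ell$, the cell $f$ of the reduced arrangement containing $p$ is unbounded to the right along $\ell$ and is the last zone cell in that direction; when $m$ is inserted, the portions of $m$ lying inside $f$ above and below $\ell$ become left-bounding edges of only a bounded number of the new cells, and the only old left-bounding edges whose count can increase are the constantly many that $m$ actually meets within the zone near $p$. Making this precise---tracking exactly which edges are newly created on $m$, which old edges are split by $m$, and which change their left/right status---and then verifying that the net increase is at most $c$ is the heart of the argument; everything else is bookkeeping.

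Finally I would assemble the pieces: the induction yields $O(n)$ left-bounding edges, symmetry yields $O(n)$ right-bounding edges, and since the complexity of the zone is the sum of the cell complexities of the faces met by $\ell$---equivalently, the total number of these bounding edges counted with multiplicity---the complexity of the zone of $\ell$ is $O(n)$, as asserted.
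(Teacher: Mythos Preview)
The paper does not prove this statement: the Zone Theorem is quoted in the Preliminaries as a classical result of Chazelle, Guibas and Lee~\cite{CGL85} (with Pinchasi's sharp constant mentioned afterwards) and is used only as a black box, so there is no ``paper's own proof'' to compare against.

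That said, your plan is the standard textbook argument, essentially the one in~\cite{CGL85} and in later expositions such as Matou\v{s}ek's \emph{Lectures on Discrete Geometry}: normalize $\ell$ to be horizontal, split the count into left- and right-bounding edges, and run an induction on $n$ by deleting the line $m$ whose crossing with $\ell$ is rightmost. The outline is correct. The one place you should be careful is exactly the step you flag yourself: when you say ``the only old left-bounding edges whose count can increase are the constantly many that $m$ actually meets within the zone near $p$'', this needs the observation that, because $p$ is the rightmost crossing, in the reduced arrangement $\ell$ lies in a \emph{single} cell to the right of $p$; hence after re-inserting $m$ there is (above $\ell$, and symmetrically below) only one new zone cell having $m$ as a left-bounding edge, and $m$ can split at most one old left-bounding edge of the cell containing $p$ on each side of $\ell$. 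Filling in that count honestly is indeed routine bookkeeping, but it is where an incorrect proof would go wrong, so in a written-up version it should not be left entirely to the reader.
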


The best possible upper bound in the theorem is $\lfloor 9.5(n-1) \rfloor -3$, obtained by Pinchasi~\cite{Pin11}.

\medskip We shall use a generalization of the theorem, for which an extra definition is needed. Given an arrangement $\A$ and a line $\ell$, the 1-zone of $\ell$ is defined as the zone of $\ell$, and for $t>1$ the $t$-zone of $\ell$ is defined as the set of all faces adjacent to the $(t-1)$-zone, that do not belong to any $i$-zone for $i<t$. The \emph{$(\leq t)$-zone} of $\ell$ is the union of the $i$-zones of $\ell$ for all $1 \leq i \leq t$.

The following generalization of the zone theorem was given as Exercise~6.4.2 in~\cite{MATOUSEK}. Its proof can be found in~\cite[Prop.~1]{SSFVCS20}.
\begin{lemma}[\cite{SSFVCS20}]
	\label{lem:zone0}
	Let $\A$ be an arrangement of $n$ lines. Then for any $t$, the $\leq t$-zone of any additional line $\ell $ contains at most $O(tn)$ vertices.
\end{lemma}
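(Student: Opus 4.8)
The plan is as follows. First I would reduce the assertion to a statement about the \emph{complexity} of the $(\le t)$-zone of $\ell$, i.e. the total number of edge--cell incidences among its cells, equivalently $\sum comp(f)$ over the cells $f$ of the zone. Indeed, any vertex $v$ of $\A$ lying in the $(\le t)$-zone is a corner of some cell $f$ of the zone, and the two edges of $f$ meeting at $v$ lie on $\partial f$, hence are edges of the zone; since every edge of $\A$ has at most two endpoints, the number of vertices of $\A$ in the $(\le t)$-zone is at most the number of edges of the zone, which is at most its complexity. So it suffices to prove that the $(\le t)$-zone of $\ell$ has complexity $O(tn)$.

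To establish this I would argue by induction on $n$, inserting the lines of $\A$ one at a time and showing that each insertion increases the complexity of the $(\le t)$-zone of $\ell$ by only $O(t)$; the base case (one line, or the empty arrangement) is trivial, and summing over the $n$ insertions yields $O(tn)$. For $t=1$ this is precisely the standard inductive proof of the Zone Theorem. The structural facts to exploit are: inserting a line only subdivides cells (never merges them); it can only increase the \emph{level} of a cell --- the least number of crossings with lines of $\A$ along a curve from that cell to $\ell$, so that the $i$-zone is exactly the set of level-$i$ cells --- while adjacent cells always have levels differing by at most $1$; and the unique cell that the new line $m$ splits into two cells that both remain at level $1$ is the one containing $m\cap\ell$. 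One then has to track, simultaneously over all levels $1,\dots,t$, how the edges created on (and near) the inserted line $m$ enter the zone and how edges belonging to cells demoted out of the zone leave it, and show that these contributions cancel up to an additive $O(t)$. It is often cleanest to first apply a projective transformation taking $\ell$ to the line at infinity; then the $(\le t)$-zone becomes the union of the first $t$ ``layers'' of the arrangement (layer $1$ = the unbounded cells; layer $i$ = the cells adjacent to layers $<i$ and in none of them), and ``level'' becomes ``fewest lines one must cross to escape to infinity'', which makes the structure of the amortization more transparent.

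The main obstacle will be exactly this amortized count. For $t=1$ the charging scheme underlying the Zone Theorem works because whenever $m$ splits a zone cell, \emph{exactly one} of the two pieces leaves the zone and carries away enough boundary edges to pay for the edges newly created on $m$. For $t\ge 2$ this breaks: a cell split at a level $i<t$ has its ``far'' piece merely demoted to level $i+1\le t$, so that piece stays in the zone and provides nothing to charge against --- and in fact a single line can contribute $\Theta(n)$ edges to the $(\le t)$-zone once $t\ge 2$, so there is no $O(t)$ bound on the raw number of new zone edges per inserted line, and the $O(t)$ saving has to be recovered globally. Designing the right potential --- one that keeps separate accounts for the two ``sides'' of each zone cell relative to the direction toward $\ell$ and that passes charge down through the levels $1,\dots,t$ as cells get demoted --- is the genuinely technical core; the rest is bookkeeping and the two reductions above.
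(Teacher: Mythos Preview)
The paper does not give its own proof of this lemma: it simply cites \cite{SSFVCS20} (and Matou\v{s}ek's Exercise~6.4.2). So there is no in-paper argument to compare against beyond a probabilistic proof the authors drafted and then commented out, which uses Clarkson--Shor random sampling (keep each line of $\A$ independently with probability $1/2$, observe that a $(\le t)$-zone edge survives into the ordinary zone of the sample with probability $\ge 2^{-(t+2)}$, and apply the Zone Theorem to the sample); that draft argument, and the standard approach behind the exercise, are sampling-based rather than incremental.

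Your proposal, by contrast, is not yet a proof. You correctly identify that the naive ``each inserted line adds $O(t)$'' fails for $t\ge 2$ because a single line can contribute $\Theta(n)$ new zone edges, and you then say that the fix is ``the right potential'' without supplying one. That is precisely the missing idea: you have reduced the problem to designing an amortization scheme and then stopped. There is no standard off-the-shelf potential for the $(\le t)$-zone that makes this incremental argument go through (in particular, the left/right bookkeeping from the $t=1$ Zone Theorem proof does not obviously extend, since demoted cells stay in the zone for $t-1$ further levels and can absorb charge repeatedly). Unless you can actually exhibit the potential and verify that it changes by $O(t)$ per insertion, the argument is incomplete.

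If your goal is a self-contained proof, the cleaner route is the sampling one: take a random subset $\L'\subset \L$ with $|\L'|\approx n/t$ (or keep each line with probability $1/t$), note that a vertex of the $(\le t)$-zone of $\ell$ in $\A(\L)$ becomes a vertex of the $1$-zone of $\ell$ in $\A(\L')$ with probability $\Omega(1/t^2)$, and bound the latter by the Zone Theorem. This avoids the amortization difficulty entirely and gives the $O(tn)$ bound directly. Your reduction between ``number of vertices'' and ``complexity'' is fine (the paper in fact uses the opposite direction, via planarity, to derive Corollary~\ref{lem:zone} from Lemma~\ref{lem:zone0}).
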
 
By planarity, this implies:
\begin{corollary}
	\label{lem:zone}
	Let $\A$ be an arrangement of $n$ lines. Then for any $t$, the $\leq t$-zone of any additional line $\ell $ has complexity $C_{\le t}(\ell)=O(tn)$.
\end{corollary}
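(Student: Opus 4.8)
The plan is to deduce the complexity bound from the vertex bound of Lemma~\ref{lem:zone0} by a short double-counting argument that uses planarity of the arrangement. Let $Z$ denote the set of cells ($2$-faces) of $\A$ that lie in the $\le t$-zone of $\ell$. Each such cell $f$ is convex, so every line incident with $f$ contributes exactly one edge to $\partial f$ and every edge of $f$ lies on exactly one line; hence $comp(f)$ equals the number of edges of $\A$ on $\partial f$. By the definition of zone complexity,
$$C_{\le t}(\ell)=\sum_{f\in Z}comp(f),$$
and since each edge of a line arrangement borders exactly two cells, every edge of $\A$ is counted at most twice on the right-hand side. Therefore $C_{\le t}(\ell)\le 2\,|E_Z|$, where $E_Z$ is the set of edges of $\A$ bordering at least one cell of $Z$.

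Next I would bound $|E_Z|$ by the number of vertices of $\A$ in a slightly larger zone. Fix $e\in E_Z$ bordering a cell $f\in Z$, and suppose $f$ lies in the $i$-zone with $i\le t$. An endpoint $v$ of $e$ is a corner of the convex region $f$, hence a vertex of $\A$ on $\partial f$; being adjacent to $f$, it belongs to the $\le(i+1)$-zone, and thus to the $\le(t+1)$-zone of $\ell$. (A ray has one such finite endpoint; after a routine perturbation removing parallel pairs no edge of $\A$ is a whole line, and in any case there are at most $n$ such edges, contributing only an additive $O(n)$.) Since the arrangement is simple, every vertex of $\A$ is incident to exactly four edges, so every edge of $E_Z$ is counted in $\sum_{v}\deg(v)$ over the vertices of the $\le(t+1)$-zone, giving $|E_Z|\le 4N_{t+1}$, where $N_{t+1}$ is the number of vertices of $\A$ in the $\le(t+1)$-zone of $\ell$. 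Applying Lemma~\ref{lem:zone0} with parameter $t+1$ gives $N_{t+1}=O((t+1)n)=O(tn)$, and combining the two estimates yields $C_{\le t}(\ell)=O(tn)$, as claimed.

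I do not expect a genuine obstacle here; the argument is essentially bookkeeping plus the planarity of the arrangement (each edge borders two cells, each vertex has bounded degree). The only point that needs care is that the boundary vertices of a cell in the \emph{exact} $t$-zone may lie one level farther out, so Lemma~\ref{lem:zone0} must be invoked at level $t+1$ rather than $t$ (this affects only the hidden constant); the unbounded edges, and whole-line edges if parallel lines are permitted, are similarly harmless.
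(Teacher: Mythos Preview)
Your argument is correct and is exactly what the paper's one-line justification ``By planarity, this implies'' is gesturing at: bounded vertex degree in a simple line arrangement lets you convert the $O(tn)$ vertex count of Lemma~\ref{lem:zone0} into an $O(tn)$ edge count and hence an $O(tn)$ complexity bound. Your caution about invoking the lemma at level $t+1$ rather than $t$, and about unbounded edges, is harmless and affects only constants, as you note.
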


\subsection{Leveraging from 2-hyperedges to t-hyperedges}

The following lemma allows bounding the number of $t$-hyperedges in a hypergraph $H=(\V,\E)$ in terms of the number of its $2$-hyperedges (i.e., the size of its Delaunay sub-hypergraph) and its \emph{VC-dimension}. 

Let us recall the classical definition of VC-dimension. A subset $\V'\subseteq \V$ is \emph{shattered} if all its subsets are realized by hyperedges, meaning $\{ \V'\cap e\colon e\in \E\} = 2^{\V'}$. The \textit{VC-dimension} of $H$, denoted by $VC(H)$, is the cardinality of a largest shattered subset of $\V$, or $+\infty$ if arbitrarily large subsets are shattered.  

\begin{lemma}[Theorem 6 (ii),(iii) in \cite{AKP21}]\label{Lem:Leverage}
	Let $H=(\V,\E)$ be an $n$-vertex hypergraph. Suppose that there exists an absolute constant $c$ such that for every $\V' \subset \V$, the Delaunay graph of the sub-hypergraph induced by $\V'$ has at most $c|\V'|$ edges. Then the VC-dimension $d$ of $H$ is at most $2c+1$, and the number of hyperedges of size at most $t$ in $H$ is $O(t^{d-1}n)$.
\end{lemma}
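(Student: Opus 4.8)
Write $d:=VC(H)$; I would treat the two assertions separately.

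The bound on the VC-dimension is the short part. Let $\V_0\subseteq\V$ be a shattered set with $|\V_0|=d$. Since every subset of $\V_0$ is realized by a hyperedge, in particular every pair $\{u,v\}\subseteq\V_0$ equals $e\cap\V_0$ for some $e\in\E$, so $\{u,v\}$ is an edge of the Delaunay graph of the induced sub-hypergraph $H[\V_0]$. Hence that Delaunay graph is the complete graph $K_d$, on $\binom{d}{2}$ edges, and applying the hypothesis with $\V'=\V_0$ gives $\binom{d}{2}\le c\cdot d$, i.e.\ $d\le 2c+1$. As this holds for every shattered set, $VC(H)\le 2c+1$.

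For the counting bound I would prove the stronger hereditary statement that for every $\V'\subseteq\V$ with $|\V'|=m$ the number of hyperedges of $H[\V']$ of size at most $t$ is $O_{c,d}(t^{d-1}m)$, by induction on $t$. The base case $t\le 2$ is exactly the hypothesis, together with the trivial bound $m+1$ on hyperedges of size $\le 1$. For the inductive step I would use the Clarkson--Shor probabilistic technique: take a random subset $S\subseteq\V'$ keeping each vertex independently with probability $p=\Theta(d/t)$. For any hyperedge $e$ with $|e|\le t$ the event $e\not\subseteq S$ has probability bounded away from $0$, and whenever it occurs the trace $e\cap S$ is a hyperedge of $H[S]$ of size $\le t-1$, hence one of only $O((t-1)^{d-1}|S|)$ such hyperedges by the induction hypothesis; taking expectations and using $\mathbb E|S|=pm$ then relates the number of $t$-hyperedges of $H[\V']$ to this quantity.

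The real obstacle is the last step, controlling the \emph{multiplicity} with which distinct $t$-hyperedges restrict to the same trace on $S$: a naive bound on this number is linear in $|\V'\setminus S|$, which yields only a quadratic dependence on $m$, and this crudeness is unavoidable for a single sample, since any two hyperedges of size $\le t$ differ in at most $2t$ vertices, so a density-$\Theta(1/t)$ sample misses their difference with constant probability. The resolution must exploit the bounded VC-dimension $d$: one attaches to each small hyperedge a ``defining set'' of $O(d)$ vertices (outside the hyperedge) that pins it down among all hyperedges with the same trace, and counts, instead of traces, the configurations consisting of a surviving part of size $\le 2$ together with a defining set of size $O(d)$ that occur inside $S$. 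The number of such configurations is governed by the hereditary linear bound on Delaunay graphs --- which in configuration language is precisely a linear bound on the number of ``level-$0$'' configurations --- and balancing $p=\Theta(d/t)$ against this count produces the factor $t^{d-1}$ while keeping the dependence on $m$ linear. Making the extraction of a bounded defining set from the VC-dimension precise, and checking that the relevant auxiliary (link) sub-systems still obey the induction, is the technical heart; it can be packaged through Haussler's packing lemma and the shallow-packing machinery.
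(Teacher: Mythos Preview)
Your argument for the VC-dimension bound is exactly the one-line observation the paper records after the lemma: a shattered $d$-set forces $\binom{d}{2}\le cd$, hence $d\le 2c+1$. So on that half you match the paper verbatim.

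For the counting assertion the paper gives no proof at all; it only remarks that ``the assertion regarding the number of hyperedges is more involved'' and defers entirely to the cited reference. There is thus no in-paper argument to compare your plan against.

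On the merits of your plan: the overall shape --- Clarkson--Shor sampling together with a bounded ``defining set'' supplied by the VC-dimension --- is indeed the mechanism behind results of this type, and you correctly isolate the genuine obstacle, namely that many distinct $t$-hyperedges can collapse to the same trace on a sparse sample. Two points deserve tightening. First, the event you single out, $e\not\subseteq S$, is too weak to drive anything: with $p=\Theta(d/t)$ it holds with probability essentially $1$ for every $e$ of size $\le t$, so conditioning on it buys nothing; the useful event is rather that $|e\cap S|\le 2$, which has probability bounded away from $0$ and feeds the hereditary linear bound on Delaunay graphs directly, without an induction on $t$. Second, the sentence ``attach to each small hyperedge a defining set of $O(d)$ vertices that pins it down'' is exactly where all the work lives, and you leave it at the level of an intention; bounded VC-dimension does not hand you such a witness set for free, and making this step precise (whether through the shallow-cell/packing route you mention or through an explicit combinatorial witness) is what the cited paper actually does. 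As a plan your outline is on target, but as a proof it remains a sketch with the decisive lemma still to be supplied.
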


The lemma generalizes similar results proved in~\cite{ADEP21,BP13} for hypergraphs of pseudo-discs with respect to pseudo-discs. The assertion regarding the VC-dimension is a simple observation. (Indeed, if a set of $d$ vertices is shattered, then we have ${{d}\choose{2}} \leq cd$, and thus, $d-1 \leq 2c$, or equivalently, $d \leq 2c+1$.) The assertion regarding the number of hyperedges is more involved. 

\section{The number of $t$-hyperedges in $H(\L,\F)$}

In this section we prove Theorem~\ref{thm:main}.
%\subsection{Upper bound}
We prove the following stronger statement:
\begin{proposition}\label{Prop:Degree}
	Let $\L$ be a family of $n$ lines in the plane, let $\F$ be a family of pseudo-discs, and assume both families are in general position. Then for each $\ell \in \L$,  
	$$  |\{  e \in \E(H(\L,\F))   :  |e|=t, \ell \in e \}|  =O_t(n).  $$
	Consequently, $|\{  e \in \E(H(\L,\F))   :  |e|=t \}|  =O_t(n^2)$.
\end{proposition}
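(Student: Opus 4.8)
The key reduction is already flagged by the excerpt: by Lemma~\ref{Lem:Leverage} it suffices to show that for the hypergraph $H=H(\L,\F)$, every \emph{induced sub-hypergraph} on a vertex subset $\L'\subseteq\L$ has a Delaunay graph with $O(|\L'|)$ edges; this immediately bounds the VC-dimension by an absolute constant $d$ and gives $O_t(n^2)$ many $t$-hyperedges globally via the $O(t^{d-1}n)$ count summed appropriately — or, better, to get the per-line bound $O_t(n)$ of the Proposition, I would prove a localized statement: fixing $\ell\in\L$, every Delaunay graph (of $H$ restricted to any $\L'\ni\ell$) has bounded degree at $\ell$, or more precisely argue that the number of $2$-hyperedges through $\ell$ is $O(1)$ and then feed this into a per-vertex version of the leveraging lemma. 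So the real content is: \textbf{the Delaunay graph of $H(\L,\F)$ has a linear number of edges, with the constant independent of $\F$.}

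To establish that, I would use the two pseudo-disc tools together with the zone theorem. Consider a $2$-hyperedge $\{\ell_1,\ell_2\}$ witnessed by a pseudo-disc $c\in\F$ that meets exactly $\ell_1$ and $\ell_2$ among $\L$. By Lemma~\ref{Pin14Lemma1} I may shrink $c$ while keeping the pseudo-disc property; shrink until $c$ is about to lose contact with one of the two lines, so that $c$ becomes a small pseudo-disc "pinned" near a point of $\ell_1\cup\ell_2$ — in the generic case it will sit in a single cell of the arrangement $\A(\L)$ touching $\ell_1$ and $\ell_2$ on the boundary of that cell, or it will be squeezed into a thin region straddling exactly one crossing. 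The upshot is that each Delaunay edge can be charged to a cell of $\A(\L)$ (or to an edge of $\A(\L)$) together with a choice of two bounding lines of that cell. Now apply Lemma~\ref{lem:disjoint} with $\B$ taken to be suitably shrunk "stubs" of the lines, or equivalently apply the planarity packaged in that lemma to the family of witness pseudo-discs after shrinking: two lines that are Delaunay-adjacent via a disc meeting \emph{only} those two lines is exactly the setup of Lemma~\ref{lem:disjoint} once we replace each line by a disjoint connected representative set that the relevant discs still separate correctly. This yields planarity of the Delaunay graph, hence $O(n)$ edges, with an absolute constant — and the same argument applied to $H$ restricted to $\L'$ gives the hypothesis of Lemma~\ref{Lem:Leverage}.

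For the \emph{per-line} refinement I would localize: fix $\ell\in\L$ and count $t$-hyperedges containing $\ell$. A pseudo-disc $c$ with $|e_c|=t$ and $\ell\in e_c$ must intersect $\ell$ and exactly $t-1$ other lines. Shrinking $c$ via Lemma~\ref{Pin14Lemma1} toward a point of $c\cap\ell$, I can assume $c$ lies in (or is pinned at the boundary of) a face of the arrangement $\A(\L\setminus\{\ell\})$ that lies in the $(\le t-1)$-zone of $\ell$; the remaining $t-1$ lines it meets are among the lines bounding the cells of that $(\le t)$-zone. By Corollary~\ref{lem:zone} the total complexity of this zone is $O(tn)$, so there are $O(tn)$ candidate cells/edges, each of which — by a local application of the planarity argument above (Lemma~\ref{lem:disjoint} restricted to the lines bounding one zone-cell and its neighbors) — contributes only $O_t(1)$ distinct $t$-hyperedges. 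Summing gives $O_t(n)$, and then summing over all $\ell\in\L$ gives $O_t(n^2)$.

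The \textbf{main obstacle} is making the charging in the second paragraph fully rigorous: when $c$ is shrunk it need not end up inside a single cell of $\A(\L)$ — it may get pinned against a vertex of the arrangement and straddle two or more cells, or the "only these two lines" condition may be fragile under shrinking because a third line could be nicked during the process. Handling this requires care in choosing \emph{which} point to shrink toward and in the general-position hypotheses (no line through a boundary-intersection point of two pseudo-discs, no three concurrent lines), and in verifying that Lemma~\ref{lem:disjoint} applies with the chosen disjoint representatives $\B$. A secondary technical point is that Lemma~\ref{Lem:Leverage} as stated gives a global $O(t^{d-1}n)$ bound rather than the per-line bound, so for the stronger Proposition one must either re-run the proof of that lemma with a distinguished vertex or combine the global bound with the zone-theorem localization; I expect the zone-based localization to be the cleaner route and the one the authors take.
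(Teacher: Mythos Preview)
Your central claim in the second paragraph—that the Delaunay graph of $H(\L,\F)$ has $O(n)$ edges (and hence that Lemma~\ref{Lem:Leverage} applies to $H$ directly)—is false. Take an $n/2\times n/2$ grid of lines and place a tiny disc at each of the $\Theta(n^2)$ crossing points: each disc meets exactly two lines, so the Delaunay graph has $\Theta(n^2)$ edges. This is not a technicality of the shrinking/charging step; the paper explicitly notes (and the main theorem reflects) that the number of hyperedges of \emph{every} fixed size, including $2$, can be quadratic in $n$. Your localized variant, that the number of $2$-hyperedges through a fixed $\ell$ is $O(1)$, fails for the same reason (it is $\Theta(n)$ in the grid). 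So Lemma~\ref{Lem:Leverage} cannot be applied to $H(\L,\F)$ itself.

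The paper's way out is the step you are missing: fix $\ell$ \emph{first} and pass to the ``link'' hypergraph $H'$ on $\L\setminus\{\ell\}$ whose edges are $\{e\setminus\{\ell\}: e\in\E(H),\ \ell\in e\}$. The $2$-hyperedges of $H'$ are precisely the $3$-hyperedges of $H$ containing $\ell$, and \emph{these} are $O(n)$. The reason is that any pseudo-disc $c$ witnessing such a $3$-hyperedge $\{\ell,\ell',\ell''\}$ must cross $\ell$ and only $\ell',\ell''$ besides; hence there is a cell of $\A(\L\setminus\{\ell\})$ in the $(\le 2)$-zone of $\ell$ two of whose bounding edges (one on $\ell'$, one on $\ell''$) are met by $c$ and no other bounding edge of that cell is. No shrinking is needed here. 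Within each such cell, Lemma~\ref{lem:disjoint} (with $\B$ the open edges of the cell) gives a planar graph, so at most $3\cdot comp(f)$ pairs; summing over the $(\le 2)$-zone and invoking Corollary~\ref{lem:zone} yields $O(n)$. This argument is hereditary in $\L'$, so $H'$ satisfies the hypothesis of Lemma~\ref{Lem:Leverage}, giving $O_t(n)$ many $(t-1)$-edges of $H'$, i.e., $t$-hyperedges of $H$ through $\ell$. Your third paragraph gestures at the zone localization but then tries to shrink $c$ into a single face of $\A(\L\setminus\{\ell\})$ while it still crosses $t-1$ other lines, which is contradictory, and the asserted ``$O_t(1)$ distinct $t$-hyperedges per cell'' has no justification; the paper avoids all of this by reducing to the $t=3$ case and leveraging.
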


\begin{proof}[Proof of Proposition~\ref{Prop:Degree}]
	First we prove the statement for hyperedges of size 3, and then we leverage the result to general hyperedges. 
	
	\paragraph{3-hyperedges.} Fix a line $\ell$. We observe that for a pseudo-disc $c$ that defines a 3-hyperedge $  \{  \ell,\ell',\ell''  \} $ there exists a cell of $\A(\L \setminus \{ \ell \} )$ which is in the $\le 2$-zone of $\ell$ in $\A(\L \setminus \{ \ell \} )$ such that $c$ intersects two edges of this cell where one of these edges is on $\ell'$ and the second is on $\ell''$. With every such pseudo-disk $c$ we associate one such cell $f_c$ and one such pair of edges of this cell, and denote this pair by $e_c$. 
	
	Define a graph $G=(V,E)$ whose vertices are all edges in the $(\leq 2)$-zone of $\ell$ in $\A(\L \setminus \{ \ell \} )$, and whose edges are the pairs $e_c$ associated with the pseudo-disks that define a 3-hyperedge. Note that for any hyperedge $e=\{ \ell, \ell', \ell''  \}$ we choose exactly one pair of edges of $\A(\L \setminus \{ \ell \} )$ - one is on $\ell'$ and one is on $\ell''$ - that form a corresponding edge of $G$. Thus by construction, $|E|$ is equal to the number of 3-hyperedges containing $\ell$, and so, we want to prove that $|E|=O(n)$. 
	
	Consider a single cell $f$ of $\A(\L \setminus \{ \ell \} )$. 
	For each pseudo-disk $c$ that defines a 3-hyperedge containing $l$ and has $f_c=f$, $c$ does not intersect any other edge of $f$ besides the two edges in $e_c$ (as otherwise, $c$ would intersect at least 4 lines of $\L$). Hence, the restriction of $G$ to the edges of the cell $f$ (after removing their endpoints), satisfies the assumptions of Lemma~\ref{lem:disjoint}. Thus, by Lemma~\ref{lem:disjoint}, the subgraph of $G$ induced by the edges of $f$ is planar, and hence, its number of edges is at most $3$ times the complexity of $f$. Summing over all cells in the $(\leq 2)$-zone of $\ell$, we obtain $|E| \leq 3\sum_f comp(f)=O(n)$ by Corollary~\ref{lem:zone}, and therefore, $|E|=O(n)$, as asserted.     
	
	%\balazs{we could do all edges at once by drawing once the curves between them inside appropriate ps-disks and then we get one drawn graph where non-incident edges intersect even many times (by the now non-necessarily lemma) and then we can apply hanani tutte, but i guess doing it per cell and using the new lemma is a bit simpler to tell, so i prefer as it is now.}
	
	\paragraph{$t$-hyperedges.} Fix a line $\ell$, and consider the hypergraph $H'$ whose vertex set is $\L \setminus  \{ \ell  \}$ and whose edge set is $\{e \setminus   \{ \ell   \} : e \in \E(H), \ell \in e\}$. The 2-hyperedges of $H'$ correspond to 3-hyperedges of $H$ containing $\ell$, and thus, by the first step, their number is $O(n)$. Furthermore, for any $\L' \subset \L \setminus  \{ \ell  \}$, the number of 2-hyperedges in the restriction of $H'$ to $\L'$ is $O(|\L'|)$, by the same argument. Therefore, $H'$ satisfies the assumptions of Lemma~\ref{Lem:Leverage}, which implies that the VC-dimension $d$ of $H'$ is constant, and that the number $C_{t-1}$ of $(t-1)$-hyperedges of $H'$ is $O(t^{d-1}n)$. 
	
	\iffalse
	It is easy to see that $d$ is constant. Indeed, by Proposition~\ref{prop:total}, %Domotor: I've removed this as I don't think we need to mention it. \footnote{Note that the use of Proposition~\ref{prop:total} is valid, since its proof does not use Theorem~\ref{thm:main}.}
	the total number of hyperedges in a restriction of $H$ to any $d$ vertices is $O(d^3)$. On the other hand, there exists a vertex set of size $d$ that is shattered by $\E$, and thus, the restriction of $\E$ to that set is of size $2^d$. Hence, we have $2^d \leq cd^3$ for a universal constant $c$, and thus, $d \leq c'$ where $c'$ is a universal constant. Therefore, $C_{t-1} = O_t(n)$.
	\fi
	
	Finally, the number of $t$-hyperedges of $H$ that contain $\ell$ is equal to $C_{t-1}$. This completes the proof.
\end{proof}

%\balazs{btw with domotor our first proof attempt was to prove that if in every subhypergraph the number of 3-edges is quadratic then also the number of t-edges is quadratic (and then e.g. also your original proof for 3-edges would imply the same bound for t-edges). we could not prove nor disprove this so instead we came up with this trick to consider 3-hyperedges incident to one vertex and prove a bit more about the 3-edges which implies what we want for t-edges. but still, this is a good question to consider.}

\iffalse
\subsection{Lower bound}

%\subsection{The lower bound of Theorem~\ref{thm:main}}
%\label{subsec:LB}	

We prove the lower bound by constructing a set $\L$ of lines and a family $\F$ of discs, such that the number of $t$-hyperedges of $H(\L,\F) $ is $\Theta((\frac{n}{t})^2)$. For the sake of convenience, asume that $t$ is even and $n$ is divisible by $t$. Let $\L$ be a set of lines that forms an $\frac{n}{2} \times \frac{n}{2} $ grid, and let $\F$ be a family of pairwise disjoint discs, each of which intersects a $\frac{t}{2} \times \frac{t}{2} $ subgrid. (See Figure \ref{fig4} as an example for $n=32,t=8$; similar constructions hold for any $t|n$.) The number of hyperedges in $H(\L,\F) $ is $ (\frac{n/2}{t/2})^2  = \frac{n^2}{t^2}$. 

%\domotor{We should figure out for which $t$ the construction works!} \chaya{Do you mean $t$ as a function of $n$? why? $t$ is fixed. Anyway, I corrected to `for any $t|n$'.}

\fi

\section{The total number of hyperedges in $H(\L,\F)$}

In this section we prove Proposition \ref{prop:total}. 

%\subsection{Upper bound}

\iffalse
Note that a line $\ell$ intersects some simple Jordan region $c$, if and only if $\ell \cap conv(c) \neq \emptyset$ (where $conv(c)$ is the convex-hull of $c$).
In addition, if a family $\F$ of simple Jordan regions is a family of pseudo-discs, then the family $\{conv(c)  :  c \in \F\}$ is also a family of pseudo-discs.
\domotor{We should prove this or cite a result.}\balazs{i agree, i could convince myself that this is true but a proof or citation is necessary. nice statement btw.} Therefore, we can assume, without loss of generality, that all the pseudo-discs in $\F$ are convex. It implies that all the intersections $\{c \cap \ell\}_{c \in \F, \ell \in \L}$ are segments or empty.
\fi

\begin{proof}[Proof of Proposition~\ref{prop:total}]
	By Lemma~\ref{Pin14Lemma1} we can shrink the pseudo-discs one by one, such that the shrinking of each pseudo-disc $c \in \F$ is stopped when it becomes tangent to two lines. (Formally, first $c$ is shrunk until the first time it is tangent to some line in $\L$, and then it is shrunk towards the tangency point until the next time it is tangent to some line in $\L$.) By the general position assumption, we can perform the shrinking process in such a way that the obtained geometric objects (i.e., lines and shrinked pseudo-discs) are also in general position.  We replace each $c \in \F$ by its shrunk copy. Let $\F'$ be the obtained family. Then $H(\L,\F)= H(\L,\F')$, and by a tiny perturbation we can assume that all tangencies are in a point. 
	%(and not in a segment).
	
	For any two lines $\ell_1,\ell_2 \in \L$, denote by $\F'(\ell_1,\ell_2)$ the set of all pseudo-discs in $\F'$ that are tangent to both $\ell_1$ and $\ell_2$. We claim that for any $\ell_1,\ell_2 \in \L$, $|   \E(H(   \L,\F'  (\ell_1,\ell_2)   ))           |=O(n)$, and this implies $|\E(H)|=O(n^3)$, the assertion of Proposition \ref{prop:total}.
	
	To show this, for any $c \in \F'  (\ell_1,\ell_2)$, we define $x_{\ell_1,\ell_2}(c)=c \cap \ell_1 \in \Re^2$ and $y_{\ell_1,\ell_2}(c)=c \cap \ell_2 \in \Re^2$ (see Figure \ref{fig5}). In each of the four wedges that $\ell_1,\ell_2$ form, we define a linear order relation on the elements of $\F'  (\ell_1,\ell_2)$: $c \prec  c'$ if the segment $[x_{\ell_1,\ell_2}(c),y_{\ell_1,\ell_2}(c)]$ is completely above the segment $[x_{\ell_1,\ell_2}(c'),y_{\ell_1,\ell_2}(c')]$ (that is, if the points $x_{\ell_1,\ell_2}(c),y_{\ell_1,\ell_2}(c)$ are closer to the intersection point within the wedge than the points $x_{\ell_1,\ell_2}(c'),y_{\ell_1,\ell_2}(c')$, respectively). 
	
	First, we claim that this relation is well defined, since 
	for $c \neq c'$ two such segments never intersect. Indeed, 
	assume to the contrary they intersect, so that $y_{\ell_1,\ell_2}(c')$ is above $y_{\ell_1,\ell_2}(c)$, while $x_{\ell_1,\ell_2}(c')$ is below $x_{\ell_1,\ell_2}(c)$. The pseudo-disc $c$ divides the remainder of the wedge into two connected components -- the part `above' it and the part `below' it. Now, consider the points $x_{\ell_1,\ell_2}(c'), y_{\ell_1,\ell_2}(c')$. In the boundary of $c'$, these points are connected by two curves. As these points are in different connected components w.r.t.~$c$, each of these curves intersects $c$ at least twice, which means that $c,c'$ intersect at least 4 times, a contradiction. 
	
	Second, we claim that in each wedge, every line in $\L$ intersects a subset of consecutive elements of $\F'  (\ell_1,\ell_2)$ under the order $\prec$. Indeed, assume that some line $\ell$ intersects two pseudo-discs $c_1,c_3$, as depicted in Figure~\ref{fig5}. We want to show it must intersect $c_2$ as well. Like above, $c_2$ divides the wedge (without it) into two connected components. By the same argument as above, $c_1$ cannot intersect the component below $c_2$ (as otherwise, it would cross $c_2$ four times). Similarly, $c_3$ cannot intersect the component above $c_2$. Thus, either $\ell$ intersects at least one of $c_1,c_3$ inside $c_2$, or $\ell$ contains a point above $c_2$ and a point below $c_2$. In both cases, $\ell$ must intersect $c_2$.
	
	Finally, by passing over all elements of $\F'  (\ell_1,\ell_2)$ in each wedge, from the smallest to the largest, according to the order $\prec$, the number of times that the hyperedge defined by the current pseudo-disc is changed is linear in $|\L|$. Indeed, any such change is caused by appearance or disappearance of some line, and each line in $\L$ appears at most once and disappears at most once, along the proccess. Therefore, in each wedge, $|    \E(H(  \L, \F'(\ell_1,\ell_2  )    ))         | =O(n)$, and summing over all pairs $\{ \ell_1,\ell_2 \} \in \L$, we get $|\E(H)|=O(n^3)$.
\end{proof}

\begin{figure}[tb]
	\begin{center}
		\scalebox{0.6}{
			\includegraphics{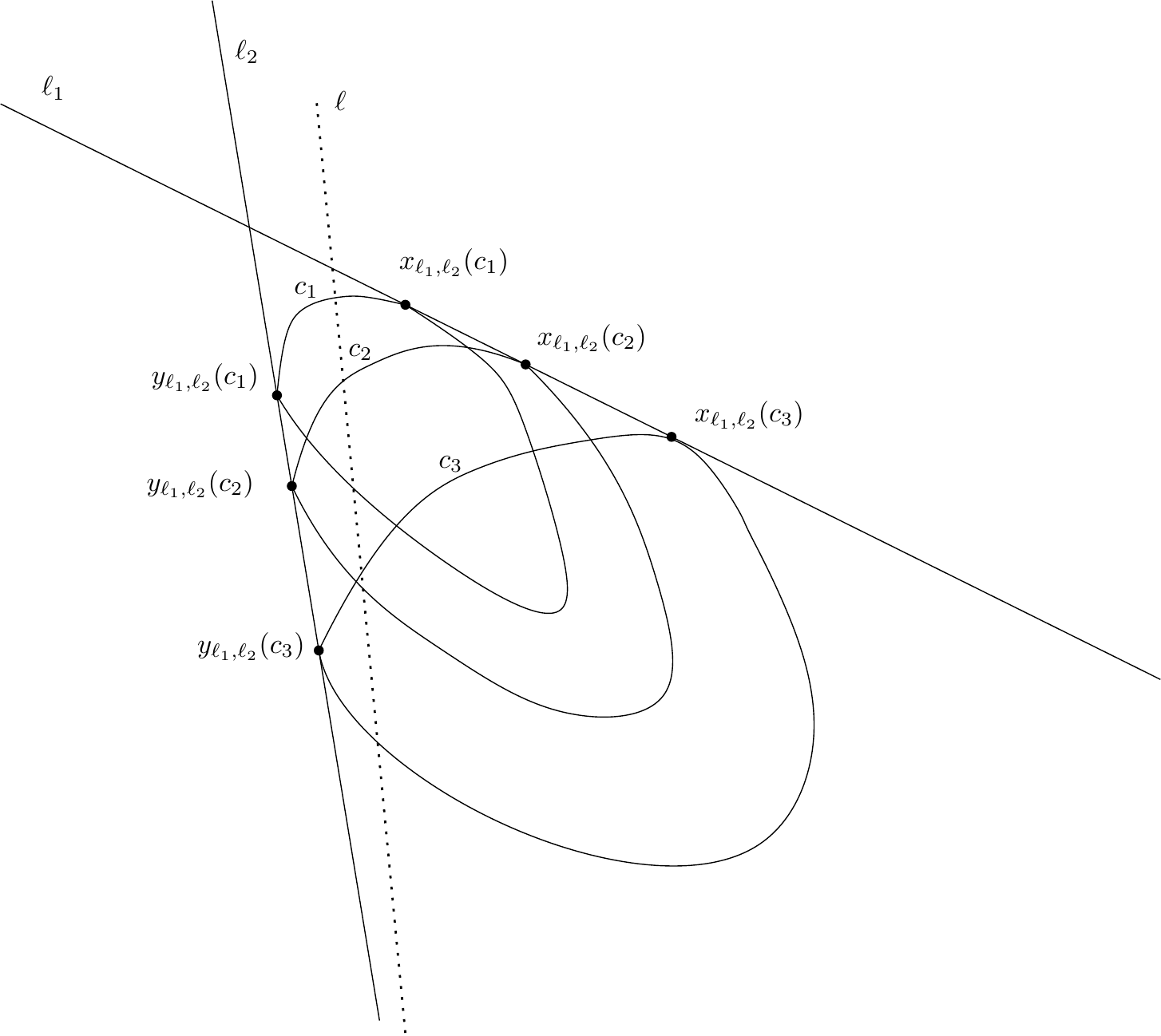}
		}
		\caption{Illustration for the proof of Proposition \ref{prop:total} - $c_1,c_2,c_3$ are tangent to the lines $\ell_1,\ell_2$, and $c_1\prec c_2 \prec c_3$.}
		\label{fig5}
	\end{center}
\end{figure}

\iffalse
\subsection{Lower bound}

Let $\L$ be the $n/2$-by-$n/2$ axis-parallel grid (where the leftmost and bottom-most lines go through the origin), and let $S$ be a family of ``fattened'' L-shapes constructed as follows. First we construct L-shapes whose vertices are $(i,i),(i+j,i),(i,i+k)$, for all $(i,j,k)$ such that $1 \leq i+j,i+k \leq n/2-1$. Then we ``fatten'' each L-shape to be of width $\frac{1}{2}-\frac{1}{j+k+1}-\frac{1}{jn^2}$ around the original L-shape  (the last term serves only as a small perturbation to avoid overlapping boundaries). %(When some L-shapes with the same center get the same width, we slightly update their widths to be distinct, but still very close to each other and to their original width.)

\begin{figure}[tb]
	\begin{center}
		\scalebox{0.6}{
			%\includegraphics{fig7}
			\includegraphics{grid-l-shapes}
		}
		\caption{Some pseudo-disks of the lower bound construction. Their boundaries may intersect at most twice.}
		%\caption{Two fattened L-shapes with the same center, may intersect not as pseudo-discs, only if both the $j,k$ parameters of the thinner one, are larger than the corresponding parameters of the fatter one.}
		\label{fig7}
	\end{center}
\end{figure}

It is easy to see that the family of fattened L-shapes is indeed a family of pseudo-discs (see Figure \ref{fig7}). Furthermore, for each $i$, the set of fattened L-shapes whose center is $(i,i)$ yields $(n/2-1-i)^2$ distinct hyperedges, and the sets of hyperedges that correspond to different centers are disjoint. Therefore, the total number of hyperedges is $\Omega(n^3)$, as required.

\fi

\section{Open Problems}

We conclude this note with a few open problems.

\paragraph{Hypergraph of lines and inscribed pseudo-discs.} A natural question is whether the arguments of Aronov et al.~\cite{ANPS93} can be extended from discs to pseudo-discs.
We have found that all their arguments would go through if we knew that every triangle has an inscribed pseudo-disc.
More precisely, we would need that for any triangle formed by three sides $a,b,c$, there is a pseudo-disc $d\in \F$, contained in the closed triangle, that intersects every side in exactly one point, or if there is no such $d\in\F$, then we can add such a new pseudo-disc $d$ to $\F$ such that $\F\cup\{d\}$ still forms a pseudo-disc family.
Unfortunately, it seems that such a theory has not been developed yet, not even for $\F$ all whose elements are convex.

We note that for the related problem regarding circumscribed pseudo-discs, even a stronger result is known. Specifically, it was shown in~\cite[Thm.~5.1]{SH91} that for any three points $a,b,c$, there is a pseudo-disc $d\in \F$ such that $a,b,c\in\partial d$, or if there is no such $d\in\F$, then we can add such a new pseudo-disc $d$ to $\F$ such that $\F\cup\{d\}$ still forms a pseudo-disc family. 
%We are also not aware of any results regarding circumscribed pseudo-discs.
%Here the natural conjecture is that for any three points $a,b,c$, there is a pseudo-disc $d\in \F$ such that $a,b,c\in\partial d$, or if there is no such $d\in\F$, then we can add such a new pseudo-disc $d$ to $\F$ such that $\F\cup\{d\}$ still forms a pseudo-disc family.
%The proof of these results would most likely require a technical analysis, similar to \cite{SH91}.

\paragraph{Dependence on $t$ in Theorem~\ref{thm:main}.} While we showed the quadratic dependence on $n$ in Theorem~\ref{thm:main} to be tight, the dependence on $t$ is not clear. It seems plausible that 
$$  |\{  e \in \E(H(\L,\F))   :  |e|=t \}|  =O(t n^2),  $$
but we have not been able to prove this. On the other hand, even the stronger upper bound $O(n^2)$ for any fixed $t$, that would immediately imply Proposition~\ref{prop:total} might hold.

\iffalse
\paragraph{Dependence on $t$ in Lemma~\ref{lem:zone}.} Our proof of Lemma~\ref{lem:zone} yields the upper bound $C_{\leq t}(\ell)  = O(t2^t n)$ on the complexity of the $(\leq t)$-zone. It seems that the bound in terms of $t$ is very weak and can be improved significantly. In particular, even the bound $O(tn)$ seems possible.  
\fi

\paragraph{Analogue of Lemma~\ref{Lem:Leverage} for 3-sized hyperedges.} It seems plausible that one can prove the following analogue of Lemma~\ref{Lem:Leverage} for 3-sized hyperedges: If in some hypergraph on $n$ vertices, for any induced hypergraph, the number of 3-sized hyperedges is quadratic in the number of vertices, then for any fixed $t$, the number of $t$-sized hyperedges is $O_t(n^2)$. Such a strong leveraging lemma would allow an easier proof of Theorem~\ref{thm:main}.

\section*{Acknowledgements}

The authors are grateful to Rom Pinchasi for inspiring and helpful suggestions, to Stefan Felsner for suggesting to use Lemma~\ref{lem:zone0} and for other valuable suggestions, and to Manfred Scheucher for useful discussions on arrangements of pseudo-discs.

%\bibliographystyle{plain}
%\bibliography{references}

%BIBLIOGRAPHY
% You do not have to use the same format for your references, but 
%    include everything in this file.  Don't use natbib please.
% If you use BibTeX to create a bibliography, copy the .bbl file into here.
% \newblock is optional (it adds a little space)

\end{document}